\newcommand\R{\mathbb R}
\newtheorem{theorem}{Theorem}
\newtheorem{lemma}{Lemma}
\newtheorem{notation}{Notation}
\begin{document}

\title{On the Packing Density Bound in 3-Space }
\author{Arkadiy Aliev}
\date{}
\maketitle
    \begin{abstract}
        The note shows an easy way to improve E.H. Smith's packing density bound in $\R^3$ from $0.53835...$ to $0.54755...$ .
    \end{abstract}
    \begin{notation}
        Let $K,L\subset\R^{2}$ be two planar convex bodies. We denote their mixed volume by $(K,L)$.
    \end{notation}
    \begin{notation}
        We denote the density of the densest lattice packing by translates of $K$ in $\R^{n}$ by $\delta_{L}(K)$. 
    \end{notation}
    \begin{lemma}
        Let $H\subset \R^{2}$ be a convex centrally symmetric hexagon or a parallelogram. Then for a centrally symmetric convex body $C\subset \R^{2}$ we have:
        $$
        (C,H)\geq \frac{1}{\sqrt{\delta_{L}(C)}}\sqrt{|C|}\sqrt{|H|}.
        $$
    \end{lemma}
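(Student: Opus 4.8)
The plan is to exploit that the mixed area $(C,H)$ depends on $C$ only through the support function of $C$ at the finitely many edge normals of $H$, and to use this to replace $C$ by a larger circumscribed hexagon on which the planar Minkowski inequality already produces the required constant. The one external fact I would invoke is the classical observation that every centrally symmetric hexagon or parallelogram $P$ tiles $\R^2$ by a lattice of covolume $|P|$; hence, if moreover $C\subseteq P$, translating $C$ by that lattice yields a lattice packing of $C$ of density $|C|/|P|$, whence $|P|\ge |C|/\delta_L(C)$ for every centrally symmetric hexagon or parallelogram $P$ containing $C$.

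First I would record the elementary formula $(C,H)=\tfrac12\sum_{E}\ell(E)\,h_C(\nu_E)$, the sum running over the edges $E$ of $H$, with $\ell(E)$ the length and $\nu_E$ the outer unit normal of $E$; in particular $(C,H)$ depends on $C$ only through $h_C(\pm u_1),h_C(\pm u_2),h_C(\pm u_3)$, where $\pm u_1,\pm u_2,\pm u_3$ are the edge normals of $H$ (in the parallelogram case one of the pairs drops out). Next I would introduce the slab body $S:=\{x\in\R^2:\ |\langle x,u_i\rangle|\le h_C(u_i)\ \text{for all }i\}$. Since the $u_i$ are pairwise non-parallel for a genuine hexagon (any two of the slabs already intersect in a bounded parallelogram), $S$ is a bounded centrally symmetric convex polygon with at most six edges, i.e.\ a centrally symmetric hexagon or a parallelogram; furthermore $C\subseteq S$ and $h_S(\pm u_i)=h_C(\pm u_i)$, the latter because $C\subseteq S$ forces $h_S(\pm u_i)\ge h_C(\pm u_i)$ while the defining slabs force the reverse inequality. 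By the previous formula this gives $(C,H)=(S,H)$.

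It then remains to combine the planar Minkowski inequality $(S,H)^2\ge |S|\,|H|$ (a consequence of Brunn--Minkowski) with the circumscribed-polygon bound $|S|\ge |C|/\delta_L(C)$:
$$(C,H)^2=(S,H)^2\ \ge\ |S|\,|H|\ \ge\ \frac{|C|}{\delta_L(C)}\,|H|,$$
and taking square roots completes the argument.

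I expect the heart of the matter to be the passage from $C$ to $S$: applied to $C$ and $H$ directly, Minkowski only gives the weaker bound $(C,H)^2\ge |C|\,|H|$, and it is precisely the freedom to enlarge $C$ to $S$ without disturbing $(C,H)$ that recovers the factor $1/\delta_L(C)$. The routine points to watch are the degenerate cases where $H$ or $S$ is a parallelogram, and the verification that a true hexagon has three pairwise non-parallel edge normals so that $S$ is bounded. One also reads off from the chain of inequalities that equality holds exactly when $H$ is homothetic to a minimum-area centrally symmetric hexagon circumscribing $C$.
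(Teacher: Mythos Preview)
Your argument is correct and is essentially the paper's own proof: your slab body $S$ is exactly the circumscribed hexagon $H'$ the paper constructs (a centrally symmetric hexagon/parallelogram about $C$ with edges parallel to those of $H$), the identity $(C,H)=(S,H)$ is the paper's ``easy to observe'' step, and the remaining chain $(S,H)^2\ge|S|\,|H|\ge|C|\,|H|/\delta_L(C)$ matches verbatim. If anything, you are a bit more explicit than the paper in justifying $(C,H)=(S,H)$ via the support-function formula and in noting the parallelogram degenerations.
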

    \begin{proof}
        Let $H=A_{1}..A_{6}$ and let $ H' = A'_{1}..A'_{6}$ be the centrally symmetric hexagon circumscribing $C$, which satisfies the condition: $A'_{i}A'_{i+1}\parallel A_{i}A_{i+1}$ for $i = 1,2,3$. Then it is easy to observe that $(C, H) = (H', H)$. Further, since $\delta_{L}(H') = 1$ and $C \subset H'$ we have $\frac{|C|}{|H'|}\leq\delta_{L}(C)$. Therefore
        $$
            (C, H) = (H', H) \geq \sqrt{|H'|}\sqrt{|H|} \geq \frac{1}{\sqrt{\delta_{L}(C)}}\sqrt{|C|}\sqrt{|H|}.
        $$
    \end{proof}
    \begin{theorem} For a centrally symmetric convex body $K\subset \R ^{3}$ we have $\delta_{L}(K)\geq 0.54755...$. 
    \end{theorem}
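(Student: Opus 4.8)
The plan is to build an explicit lattice packing of $K$ in parallel layers and to invoke the Lemma for the resulting planar problem. Fix a unit vector $u\in\R^{3}$, let $C=C(u)=K\cap u^{\perp}$ be the central section (a centrally symmetric planar convex body in $u^{\perp}$), and let $2w(u)$ be the width of $K$ in the direction $u$, so $K\subset\{x:|x\cdot u|\le w(u)\}$. Choose a densest lattice packing $M\subset u^{\perp}$ of $C$, so $\det M=|C|/\delta_{L}(C)$, and put $\Lambda=M\oplus\Z\,(2w(u)\,u)$. Copies of $K$ lying in one layer are disjoint because $M$ is admissible for $2C$, while copies in distinct layers are disjoint because they are separated along $u$ by the full $u$-extent of $2K$; hence $\{K+\lambda:\lambda\in\Lambda\}$ is a packing and
$$
\delta_{L}(K)\ \ge\ \frac{|K|}{\det\Lambda}\ =\ \frac{|K|\,\delta_{L}(C(u))}{2\,w(u)\,|C(u)|}\qquad\text{for every }u .
$$
One can sharpen the height: replacing the second generator by a staircase vector $h\,u+v$ with $h<2w(u)$, $v\in u^{\perp}$, the layers still avoid one another whenever, for a suitable $v$, the dilated section $2\,\mathrm{proj}_{u^{\perp}}\!\big(K\cap\{x\cdot u=\tfrac12 kh\}\big)$ has interior disjoint from the shifted lattice $M+kv$ for all $k\ge1$, and for $h=w(u)$ only the mid-height section is a genuine constraint (the top section being lower-dimensional after an arbitrarily small smoothing of $K$, which does not change $\delta_{L}$ in the limit). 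For the round ball this offset already recovers the optimal face-centred density; in general I would use it only to shave the final constant and keep the displayed bound as the backbone.

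The Lemma now feeds into the planar factor. Rewritten, it reads $\delta_{L}(C)\ge |C|\,|H|/(C,H)^{2}$ for every centrally symmetric hexagon or parallelogram $H\subset u^{\perp}$, so
$$
\delta_{L}(K)\ \ge\ \sup_{u}\ \sup_{H}\ \frac{|K|\,|H|}{2\,w(u)\,(C(u),H)^{2}} ,
$$
and both the direction $u$ and the auxiliary polygon $H$ are ours to choose. As in the proof of the Lemma, the good $H$ is a hexagon (or parallelogram) circumscribing $C(u)$ with edge-normals $n_{1},n_{2},n_{3}$ and edge-lengths $\ell_{1},\ell_{2},\ell_{3}$ dictated by the support structure of $K$ in directions lying in $u^{\perp}$; then $(C(u),H)=\tfrac12\sum_{i}\ell_{i}\,h_{C(u)}(n_{i})$ is an explicit linear form in finitely many support numbers of $K$, as are $|K|$, $w(u)$ and $|H|$, so the bound collapses to a concrete inequality for a polytope inscribed in (or approximating) $K$.

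The remaining task — and the real work — is to show that for every centrally symmetric $K$ one can pick $u$ and $H$ making the right-hand side at least $0.54755\ldots$. The quantity $\sup_{u}|K|/\big(2w(u)|C(u)|\big)$ is an affine invariant of $K$ with a universal positive lower bound, and pairing that bound with any fixed planar estimate $\delta_{L}(C)\ge c_{\mathrm{pl}}$ (one may take $c_{\mathrm{pl}}\approx 0.8925$) already yields a Smith-type constant — this is how $0.53835\ldots$ arises. The improvement hinges on the fact that the bodies $K$ that are near-extremal for $\sup_{u}|K|/\big(2w(u)|C(u)|\big)$ must have central sections $C(u)$ far from the planar worst case, so that the Lemma's bound $\delta_{L}(C(u))\ge |C(u)||H|/(C(u),H)^{2}$ exceeds $c_{\mathrm{pl}}$ for them; I expect $0.54755\ldots$ to come out as the value of the resulting joint optimisation over $u$, $H$, and the finitely many shape parameters of $K$. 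Making this trade-off precise — a compactness reduction to finitely many parameters followed by an elementary but delicate extremal computation, with the staircase offset contributing a further small gain — is the main obstacle.
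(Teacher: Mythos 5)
Your proposal does not reach the theorem: you set up a layered-packing framework and then explicitly defer ``the real work'' --- showing that some choice of $u$ and $H$ forces the bound $0.54755\ldots$ --- to a compactness-and-optimisation argument that you do not carry out. That deferred step is the entire content of the theorem. Concretely, your backbone estimate $\delta_{L}(K)\ge |K|\,\delta_{L}(C(u))/(2w(u)|C(u)|)$ is far too weak on its own: by Brunn--Minkowski all one can say in general is $|K|\ge \tfrac{1}{3}\cdot 2w(u)\,|C(u)|$ (with near-equality for the double cone over $C(u)$), so the backbone yields only $\delta_{L}(K)\ge \tfrac13\delta_{L}(C)\approx 0.2975$. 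The staircase refinement with $h=w(u)$ would supply the missing factor, but your justification of it is flawed: two translates $K$ and $K+khu+v$ overlap iff $khu+v$ lies in the interior of the difference body $K+K$, and the section $(K+K)\cap\{x\cdot u=kh\}$ equals the union of the sets $K_{t}+K_{kh-t}$, which strictly contains $2K_{kh/2}$ in general (take $K$ an octahedron); so it is not true that only the mid-height section is a genuine constraint, and in any case the existence of a suitable offset $v$ for which $M+v$ avoids the relevant section is exactly the nontrivial geometric input you would still have to supply.

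The paper's route is much shorter and essentially different: it takes over, unchanged, Smith's construction and his volume inequality $|K|\ge \tfrac13|H_{K}|(d-h)+\tfrac{h}{3}\bigl(|C|+(C,H_{K})+|H_{K}|\bigr)$, where $d$ is a longest chord, $C=K\cap d^{\perp}$, $H_{K}$ is the hexagon of area $|P|/4$ produced by Smith's layer construction, and $h$ is the layer separation that Smith already proves admissible. The only new ingredient is the Lemma, used to replace the Minkowski bound $(C,H_{K})\ge\sqrt{|C|\,|H_{K}|}$ by $(C,H_{K})\ge \delta_{L}(C)^{-1/2}\sqrt{|C|\,|H_{K}|}$; combined with $d\ge h$ and Tammela's estimate $\delta_{L}(C)\ge 0.89265\ldots$ this gives $\tfrac{1}{12}+\tfrac13\bigl(0.89265\ldots+\tfrac12\bigr)=0.54755\ldots$ with no optimisation over $u$ or $H$ at all. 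If you want to salvage your approach, the missing piece is precisely a quantitative volume inequality of Smith's type tying together $|K|$, $|C|$, $(C,H_{K})$, $d$ and $h$; without it, no choice of $u$ and $H$ in your framework is shown to produce the stated constant.
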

    \begin{proof}
        We use the notation and construction from \cite{1}. 
        Let $d$ be a chord in $K$ of maximum length. $C:=K\cap d^{\perp}.$ Let $\Lambda\subset\R^{2}$ be a lattice such that $C+\Lambda$ is the densest lattice packing of translates of $C$, $det\Lambda = |P|$, $\frac{|C|}{|P|} = \delta_{L}(C)$. $H_{K}$ is a centrally symmetric hexagon with $|H_{K}|\geq \frac{|P|}{4}$. We may assume that $|H_{K}| = \frac{|P|}{4}$. $h$ is the distance between two layers in 
 the lattice packing of translates of $K$. Then as shown in \cite{1} we have:
        $$
        |K| \geq \frac{|H_{K}|(d-h)}{3} + \frac{h}{3}(|C|+(C,H_{K}) + |H_{K}|). 
        $$
        Therefore since $\delta_{L}(K)\geq \frac{|K|}{h|P|}$ and $\delta_{L}(C) \geq 0.89265... $ we have: 
        $$
        \delta_{L}(K)\geq \frac{1}{12}\frac{d}{h} + \frac{1}{3}\left(\delta_{L}(C) + \frac{1}{2}\sqrt{\delta_{L}(C)}\frac{(C,H_{K})}{\sqrt{|C|}\sqrt{|H_{K}|}}\right) \geq \frac{1}{12} + \frac{1}{3}\left(0.89265... + \frac{1}{2}\right) = 0.54755...
        $$
    \end{proof}

\textit{E-mail address:} \href{mailto:arkadiy.aliev@gmail.com}{arkadiy.aliev@gmail.com} 

\end{document}